\begin{document}

\title{Existence and uniqueness of Arrow-Debreu equilibria with
  consumptions in $\mathbf{L}^0_+$}

\author{Dmitry Kramkov\thanks{This research was supported in part by
    the Carnegie Mellon-Portugal Program and by the Oxford-Man
    Institute for Quantitative Finance at the University of Oxford.}\\
  Carnegie Mellon University and University of Oxford,\\
  Department of Mathematical Sciences,\\ 5000 Forbes Avenue,
  Pittsburgh, PA, 15213-3890, USA}

\date{\today}
\maketitle
\begin{abstract}
  We consider an economy where agents' consumption sets are given by
  the cone $\mathbf{L}^0_+$ of non-negative measurable functions and
  whose preferences are defined by additive utilities satisfying the
  Inada conditions. We extend to this setting the results in
  \citet{Dana:93} on the existence and uniqueness of Arrow-Debreu
  equilibria. In the case of existence, our conditions are necessary
  and sufficient.
\end{abstract}

\begin{description}
\item[MSC:] 91B50, 91B51.
\item[Keywords:] Arrow-Debreu equilibrium, Pareto allocation.
\end{description}

\section{Main results}
\label{sec:results}

This paper states necessary and sufficient conditions for the
existence of Arrow-Debreu equilibria in the economy where agents'
consumption sets are given by the cone $\mathbf{L}^0_+$ of
non-negative measurable functions and whose preferences are defined by
additive utilities satisfying the Inada conditions. For completeness,
a version of a well-known uniqueness criteria is also provided.  The
results generalize those in \citet{Dana:93}; see Remarks~\ref{rem:2}
and~\ref{rem:3} for details.

Consider an economy with $M\in \braces{1,2,\dots}$ agents and a state
space $(\mathbf{S}, \mathcal{S}, \mu)$ which is a measure space with a
$\sigma$-finite measure $\mu$. By $\mathbf{L}^0$ we denote the space
of (equivalence classes of) measurable functions with values in
$[-\infty,\infty)$; $\mathbf{L}^0_+$ stands for the cone of
non-negative measurable functions.  For $\alpha\in \mathbf{L}^0$ we
use the convention:
\begin{displaymath}
  \mathbb{E}[\alpha] \set \int_{\mathbf{S}} \alpha(s) \mu(ds) \set
  -\infty \quad 
  \text{if} \quad 
  \mathbb{E}[\min(\alpha,0)]=-\infty. 
\end{displaymath}
The {consumption set} of $m$th agent equals $\mathbf{L}^0_+$ and his
preferences are defined by additive utility:
\begin{displaymath}
  u_m(\alpha^m) \set \mathbb{E}[U_m(\alpha^m)], \quad \alpha^m\in
  \mathbf{L}^0_+. 
\end{displaymath}
The utility measurable field $\map{U_m}{[0,\infty)}{\mathbf{L}^0}$ has
the following properties.

\begin{Assumption}
  \label{as:1}
  For every $s\in \mathbf{S}$ the function $U_m(\cdot)(s)$ on
  $(0,\infty)$ is strictly increasing, strictly concave, continuously
  differentiable, and satisfies the Inada conditions:
  \begin{displaymath}
    \lim_{c\to \infty}U'_m(c)(s) = 0, \quad  \lim_{c\downarrow
      0}U'_m(c)(s) = \infty, 
  \end{displaymath}
  At $c=0$, by continuity, $U_m(0)(s) = \lim_{c\downarrow 0}
  U_m(c)(s)$; this limit may be $-\infty$.
\end{Assumption}

\begin{Remark}
  \label{rem:1}
  This framework readily accommodates consumption preferences which
  are common in Mathematical Finance. For instance, it includes a
  continuous-time model defined on a filtered probability space
  $(\Omega, \mathcal{F}, (\mathcal{F}_t)_{t\geq 0}, \mathbb{P})$ where
  the agents consume according to an optional non-negative process
  $\alpha=(\alpha_t)$ of consumption rates; so that $C_t = \int_0^t
  \alpha_v dv$ is the cumulative consumption up to time $t$.  In this
  case, the state space $\mathbf{S} = \Omega \times [0,\infty)$,
  $\mathcal{S}$ is the optional $\sigma$-algebra, and $\mu(ds) = \mu(d
  \omega, dt) = \mathbb{P}[d\omega] \times dt$.
\end{Remark}

By $\Lambda\in \mathbf{L}^0_+$ we denote the total endowment in the
economy.  A family $(\alpha^m)_{m=1,\dots,M}$ of elements of
$\mathbf{L}^0_+$ such that $\sum_{m=1}^M \alpha^m =\Lambda$ is called
\emph{an allocation of $\Lambda$}. By $(\alpha^m_0)\subset
\mathbf{L}^0_+$ we denote the \emph{initial allocation} of $\Lambda$
between the agents.

\begin{Definition}
  \label{def:1}
  A pair $(\zeta, (\alpha^m_1)_{m=1,\dots,M})$, consisting of a
  measurable function $\zeta>0$ (a \emph{state price density}) and an
  allocation $(\alpha^m_1)$ of $\Lambda$ (an \emph{equilibrium
    allocation}) is an \emph{ Arrow-Debreu equilibrium} if
  \begin{displaymath}
    \mathbb{E}[\zeta \Lambda] <\infty,
  \end{displaymath}
  and, for $m=1,\dots,M$,
  \begin{gather}
    \nonumber
    \mathbb{E}[\abs{U_m(\alpha^m_1)}] < \infty, \\
    \label{eq:1}
    \alpha^m_1 = \argmax \descr{\mathbb{E}[U_m(\alpha)]}{\alpha\in
      \mathbf{L}^0_+, \quad \mathbb{E}[\zeta(\alpha-\alpha^m_0)]=0}.
  \end{gather}
\end{Definition}

This is the main result of the paper.

\begin{Theorem}
  \label{th:1}
  Suppose that Assumption~\ref{as:1} holds, the total endowment
  $\Lambda>0$, and the initial allocation $\alpha^m_0\not=0$,
  $m=1,\dots,M$.  Then an Arrow-Debreu equilibrium exists if and only
  if there is an allocation $(\alpha^m)$ of $\Lambda$ such that
  \begin{equation}
    \label{eq:2}
    \mathbb{E}[\abs{U_m(\alpha^m)} + U'_m(\alpha^m)\Lambda] <
    \infty, \quad m=1,\dots,M. 
  \end{equation}
\end{Theorem}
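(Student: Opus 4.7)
\emph{Necessity.} Suppose $(\zeta,(\alpha_1^m))$ is an equilibrium. The first-order conditions for the individual optimization problem~(\ref{eq:1}), combined with the Inada condition $U'_m(0+)=\infty$ and $\mathbb{E}[\zeta\Lambda]<\infty$, force $\alpha_1^m>0$ a.s.\ and the existence of Lagrange multipliers $y_m>0$ with $U'_m(\alpha_1^m)=y_m\zeta$. Therefore $\mathbb{E}[U'_m(\alpha_1^m)\Lambda]=y_m\mathbb{E}[\zeta\Lambda]<\infty$, and since $\mathbb{E}[\abs{U_m(\alpha_1^m)}]<\infty$ by Definition~\ref{def:1}, the equilibrium allocation itself witnesses~(\ref{eq:2}).

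\emph{Sufficiency via Negishi's method.} Fix an allocation $(\bar\alpha^m)$ satisfying~(\ref{eq:2}). The key consequence of (\ref{eq:2}) is that, by pointwise concavity, for every competing allocation $(\beta^m)$ of $\Lambda$,
\[
U_m(\beta^m)\leq U_m(\bar\alpha^m)+U'_m(\bar\alpha^m)(\beta^m-\bar\alpha^m)\leq U_m(\bar\alpha^m)+U'_m(\bar\alpha^m)\Lambda,
\]
so $\mathbb{E}[U_m(\beta^m)]$ is well defined and uniformly bounded above. This makes the social planner's functional $\sum_m\lambda_m\mathbb{E}[U_m(\alpha^m)]$ finite on the feasible set for every $\lambda$ in the open simplex $\Delta^{M-1}_+$. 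For each such $\lambda$, the Pareto problem admits a unique solution $(\hat\alpha^m(\lambda))$, characterized pointwise by the common shadow price $\zeta(\lambda)>0$ with $\lambda_mU'_m(\hat\alpha^m(\lambda))=\zeta(\lambda)$. I would then introduce the excess-expenditure map
\[
F_m(\lambda):=\mathbb{E}[\zeta(\lambda)(\hat\alpha^m(\lambda)-\alpha_0^m)],\qquad m=1,\dots,M,
\]
and look for $\lambda^\star$ with $F(\lambda^\star)=0$ by a Brouwer-type fixed-point argument on the simplex, with the identity $\sum_mF_m\equiv0$ playing the role of Walras's law. Such a $\lambda^\star$ yields the Arrow-Debreu equilibrium $(\zeta(\lambda^\star),(\hat\alpha^m(\lambda^\star)))$.

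\emph{Boundary control and the main obstacle.} Ruling out boundary fixed points relies on the hypothesis $\alpha_0^m\neq0$: if $\lambda_m\downarrow0$, then from $\lambda_mU'_m(\hat\alpha^m(\lambda))=\zeta(\lambda)$ and the Inada condition $U'_m(0+)=\infty$ one extracts $\hat\alpha^m(\lambda)\to0$ pointwise; dominated convergence (using $\hat\alpha^m(\lambda)\leq\Lambda$ and $\mathbb{E}[\zeta(\lambda)\Lambda]<\infty$) then collapses $\mathbb{E}[\zeta(\lambda)\hat\alpha^m(\lambda)]$ to $0$ while $\mathbb{E}[\zeta(\lambda)\alpha_0^m]$ stays strictly positive, pushing $F_m(\lambda)$ strictly negative near that face. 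The hard part, and where the real work lies, is establishing continuity of $\lambda\mapsto F(\lambda)$ on $\Delta^{M-1}_+$ in the absence of any ambient $L^p$ structure on $\zeta(\lambda)$ or on $\hat\alpha^m(\lambda)$. The plan is to use monotonicity of $\hat\alpha^m(\lambda)$ and $\zeta(\lambda)$ in $\lambda$ coming from the pointwise FOC to produce pointwise limits along convergent subsequences, and then to upgrade to $L^1$ convergence of the integrands $\zeta(\lambda)\alpha_0^m$ and $\zeta(\lambda)\hat\alpha^m(\lambda)$ via a uniform-integrability argument anchored on the global budget identity $\mathbb{E}[\zeta(\lambda)\Lambda]=\sum_m\lambda_m\mathbb{E}[U'_m(\hat\alpha^m(\lambda))\hat\alpha^m(\lambda)]$, which (\ref{eq:2}) together with concavity keeps bounded on compact subsets of $\Delta^{M-1}_+$. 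This uniform-integrability step is the pivotal technical ingredient that converts the sufficient condition~(\ref{eq:2}) into a necessary and sufficient one.
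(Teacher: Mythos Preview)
Your overall architecture is exactly the paper's: Negishi weights, the excess-expenditure map $F_m(\lambda)=\mathbb{E}[\zeta(\lambda)(\hat\alpha^m(\lambda)-\alpha_0^m)]$, Walras' law $\sum_m F_m\equiv 0$, and a Brouwer-type fixed point on the simplex. The necessity argument is also morally the same as the paper's, which packages the first-order condition you invoke as a separate lemma.

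The genuine gap is precisely where you locate ``the hard part'': continuity of $F$. You propose to get it from monotonicity plus a uniform-integrability argument ``anchored on the global budget identity'', but you never carry that step out, and a mere uniform bound on $\mathbb{E}[\zeta(\lambda)\Lambda]$ would not by itself yield uniform integrability of the family $\{\zeta(\lambda)\Lambda\}_\lambda$. The paper closes this gap with a one-line pointwise domination that you are missing: since $\sum_m \hat\alpha^m(\lambda)=\Lambda=\sum_m \bar\alpha^m$, at every state $s$ there is an index $m=m(s)$ with $\bar\alpha^{m}(s)\leq \hat\alpha^{m}(\lambda)(s)$ and $\lambda_m>0$; hence
\[
\zeta(\lambda)=\lambda_m\,U'_m(\hat\alpha^m(\lambda))\ \leq\ U'_m(\bar\alpha^m)\ \leq\ \sum_{n=1}^M U'_n(\bar\alpha^n),
\]
a bound that is \emph{uniform in $\lambda\in\Sigma^M$}. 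Condition~(\ref{eq:2}) then gives $\mathbb{E}[\sup_{\lambda}\zeta(\lambda)\Lambda]<\infty$, so dominated convergence (not uniform integrability) delivers continuity of $F$ on the \emph{closed} simplex. This also simplifies your boundary analysis: with $F$ defined and continuous on all of $\Sigma^M$, the condition $F_m(\lambda)<0$ when $\lambda_m=0$ is immediate from $\hat\alpha^m(\lambda)=0$ and $\alpha_0^m\neq 0$, and the fixed-point lemma applies directly without any limiting argument along $\lambda_m\downarrow 0$.
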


\begin{Corollary}
  \label{cor:1}
  Suppose that Assumption~\ref{as:1} holds, the total endowment
  $\Lambda>0$, the initial allocation $ \alpha^m_0\not=0$,
  $m=1,\dots,M$, and
  \begin{displaymath}
    \mathbb{E}[\abs{U_m(s\Lambda)}] < \infty, \quad s\in (0,1], \; 
    m=1,\dots,M.  
  \end{displaymath}
  Then an Arrow-Debreu equilibrium exists.
\end{Corollary}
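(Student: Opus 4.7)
The plan is to reduce this to Theorem~\ref{th:1} by verifying its integrability condition~\eqref{eq:2} for an explicit allocation. Since $\Lambda > 0$, the equal-split $\alpha^m \set \Lambda/M$, $m=1,\dots,M$, is a strictly positive allocation of $\Lambda$ and is the natural candidate to try.

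The piece $\mathbb{E}[\abs{U_m(\alpha^m)}] < \infty$ is then immediate from the hypothesis of the corollary applied with $s = 1/M \in (0,1]$. What remains is the marginal-utility piece $\mathbb{E}[U'_m(\Lambda/M)\Lambda] < \infty$, which is the only step in the argument with any real content.

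For this, I would exploit the standard secant bound coming from concavity: for every $0 < a < b$, the monotonicity of $U'_m(\cdot)(s)$ gives $U_m(b) - U_m(a) \geq U'_m(b)(b - a)$. Applied pointwise with $a = \Lambda/(2M)$ and $b = \Lambda/M$, which is possible since $\Lambda > 0$, this yields
\begin{displaymath}
  U'_m(\Lambda/M)\,\Lambda \;\leq\; 2M\bigl(U_m(\Lambda/M) - U_m(\Lambda/(2M))\bigr),
\end{displaymath}
whose expectation is finite because both $\mathbb{E}[\abs{U_m(\Lambda/M)}]$ and $\mathbb{E}[\abs{U_m(\Lambda/(2M))}]$ are finite by hypothesis (taking $s = 1/M$ and $s = 1/(2M)$ respectively). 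Condition~\eqref{eq:2} is thus satisfied by the equal-split allocation, and Theorem~\ref{th:1} delivers an Arrow-Debreu equilibrium. The only genuinely nontrivial step is the concavity inequality that converts marginal utility at one scale of $\Lambda$ into a difference of utility values at two scales; everything else is bookkeeping.
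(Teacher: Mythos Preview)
Your proposal is correct and essentially identical to the paper's proof: the paper also takes the equal split $\alpha^m = \Lambda/M$ and uses the concavity inequality $cU'_m(c) \leq 2(U_m(c) - U_m(c/2))$ (your secant bound with $a=c/2$, $b=c$) to verify~\eqref{eq:2} before invoking Theorem~\ref{th:1}.
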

\begin{proof}
  The concavity of $U_m$ implies that
  \begin{displaymath}
    cU'_m(c) \leq 2(U_m(c) - U_m(c/2)), \quad c>0,    
  \end{displaymath}
  and the result follows from Theorem~\ref{th:1} where one can take
  $\alpha^m = \frac1M \Lambda$.
\end{proof}

The Arrow-Debreu equilibrium is, in general, not unique; see,
e.g. Example~15.B.2 in \citet{MasColellWhinstonGreen:95} which can be
easily adapted to our setting.  We state a version of a well-known
uniqueness criteria.

\begin{Theorem}
  \label{th:2}
  Suppose that the conditions of Corollary~\ref{cor:1} hold and there
  is an index $m_0$ such that for $m\not=m_0$
  \begin{equation}
    \label{eq:3}
    \text{the map $c\mapsto cU'_m(c)$ of $(0,\infty)$ to $\mathbf{L}^0_{+}$ is
      non-decreasing.}
  \end{equation}
  Then an Arrow-Debreu equilibrium exists and is unique.
\end{Theorem}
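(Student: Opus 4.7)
Since existence is handed to us by Corollary~\ref{cor:1}, only uniqueness requires proof. My plan is to parametrize equilibria by Pareto weights: by the first welfare theorem, any equilibrium allocation is Pareto optimal and, under Assumption~\ref{as:1}, is characterized by strictly positive weights $\lambda = (\lambda_m)$, normalized to the simplex, via the first-order conditions $\lambda_m U'_m(\alpha^m) = \zeta$. Writing $I_m := (U'_m)^{-1}$ and $\eta_m(\lambda, s) := \zeta(\lambda, s)/\lambda_m$, the resource constraint $\sum_m I_m(\eta_m) = \Lambda$ implicitly defines $\zeta(\lambda,\cdot)$ on $\{\Lambda > 0\}$, the equilibrium budget condition for agent $m$ becomes
\begin{displaymath}
  \mathbb{E}[\phi_m(\eta_m(\lambda)) - \eta_m(\lambda)\alpha^m_0] = 0,
  \qquad \phi_m(y) := y\, I_m(y),
\end{displaymath}
and hypothesis~\eqref{eq:3} translates directly into the statement that $\phi_m$ is non-increasing for every $m \neq m_0$ (since $\phi_m(y) = cU'_m(c)$ at $c = I_m(y)$, and $y$ decreases as $c$ increases).

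The technical core is the following monotonicity lemma, which I would obtain by implicit differentiation of the resource constraint (using $I'_k < 0$): at each $s$ with $\Lambda(s) > 0$, the shadow price $\eta_m(\lambda, s)$ is strictly increasing in $\lambda_k$ for every $k \neq m$. Now suppose, for contradiction, that $\lambda^1 \neq \lambda^2$ are two equilibria on the simplex. The ratios $\lambda^2_m/\lambda^1_m$ straddle $1$, so at least one of $\max_m \lambda^2_m/\lambda^1_m$ and $\min_m \lambda^2_m/\lambda^1_m$ is attained at some $m^* \neq m_0$; by the simplex constraint, if both extremes coincided only at $m_0$ then $\lambda^1 = \lambda^2$. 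By symmetry, assume the maximum is attained at $m^*$ and let $R > 1$ be its value. Set $\tilde\lambda := \lambda^2/R$; because $\eta_m$ is homogeneous of degree zero in $\lambda$, one has $\eta^{\tilde\lambda}_m = \eta^2_m$ for all $m$. By construction $\tilde\lambda \leq \lambda^1$ componentwise, with equality at $m^*$ and strict inequality somewhere else (since $\tilde\lambda$ and $\lambda^1$ both live on the simplex and differ). Linearly interpolating between $\lambda^1$ and $\tilde\lambda$ leaves $\lambda_{m^*}$ fixed but strictly decreases some $\lambda_k$, so the monotonicity lemma yields $\eta^2_{m^*}(s) < \eta^1_{m^*}(s)$ for $\mu$-a.e.\ $s$.

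To close the argument, I combine this strict ordering with the budget identity for agent $m^*$. Since $\phi_{m^*}$ is non-increasing and $\alpha^{m^*}_0 \geq 0$, pointwise
\[
  \phi_{m^*}(\eta^2_{m^*}) - \eta^2_{m^*}\alpha^{m^*}_0 \;\geq\; \phi_{m^*}(\eta^1_{m^*}) - \eta^1_{m^*}\alpha^{m^*}_0,
\]
and both sides have expectation zero by the budget identity, forcing pointwise equality. The two non-negative summands $\phi_{m^*}(\eta^2_{m^*}) - \phi_{m^*}(\eta^1_{m^*})$ and $(\eta^1_{m^*} - \eta^2_{m^*})\alpha^{m^*}_0$ must then individually vanish a.e.; since $\eta^1_{m^*} > \eta^2_{m^*}$ a.e., the second vanishing forces $\alpha^{m^*}_0 = 0$ a.e., contradicting $\alpha^{m^*}_0 \neq 0$. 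The main obstacle I anticipate is the rigorous verification of the monotonicity lemma (pointwise, at every $s$ with $\Lambda(s) > 0$, via the implicit function theorem applied to the resource constraint) together with the integrability of $\phi_m(\eta_m)$, both of which should follow from Assumption~\ref{as:1}, the positivity of $\Lambda$, and the equilibrium budget identities.
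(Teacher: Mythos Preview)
Your approach is correct and is essentially the paper's argument: parametrize equilibria by interior Pareto weights (Theorem~\ref{th:4}), use $0$-homogeneity together with the gross-substitute monotonicity coming from~\eqref{eq:3}, rescale one weight vector by the maximal ratio so that it matches the other at an index $m^*\neq m_0$, and derive a contradiction. The only cosmetic difference is that the paper packages everything at the level of the integrated excess-utility functions $h_m(w)=\mathbb{E}[U'_m(\pi^m(w,\Lambda))(\pi^m(w,\Lambda)-\alpha^m_0)]$, obtaining the contradiction as $0=h_{l_0}(yw_2)<h_{l_0}(w_1)=0$, whereas you work pointwise with $\eta_m$ and reach the contradiction via $\alpha^{m^*}_0=0$.
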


The proofs of Theorems~\ref{th:1} and~\ref{th:2} are given in
Section~\ref{sec:existence} and rely on the finite-dimensional
characterization of Arrow-Debreu equilibria in Theorem~\ref{th:4}.

\begin{Remark}
  \label{rem:2}
  Being necessary and sufficient, the condition~\eqref{eq:2} improves
  the sufficient criteria in~\citet{Dana:93}. This paper requires the
  existence of conjugate exponents $p,q\in[1,\infty]$, $\frac1p +
  \frac1q=1$, and measurable functions $\zeta \in \mathbf{L}^q_+$ and
  $\eta\in \mathbf{L}^1_+$ such that $\Lambda\in \mathbf{L}^p_+$ and
  the measurable fields $(U_m)$ are twice continuously differentiable
  and satisfy
  \begin{displaymath}
    U_m(c) \leq \zeta c + \eta, \quad c>0, \; m=1,\dots,M, 
  \end{displaymath}
  and
  \begin{displaymath}
    \sup_{w\in \Sigma^M}U_c(w,\Lambda) \Lambda \in
    \mathbf{L}^q_+. 
  \end{displaymath}
  Here $U(w,c)$ is the aggregate utility measurable field,
  see~\eqref{eq:5} below, and $\Sigma^M$ is the simplex in
  $\mathbf{R}^M$.
\end{Remark}

\begin{Remark}
  \label{rem:3}
  If the utility measurable fields $(U_m)$ are twice differentiable,
  then the key condition~\eqref{eq:3} is equivalent to the boundedness
  by $1$ of their relative risk-aversions $-cU_m''(c)/U_m'(c)$ and, in
  this form, is well-known; see, e.g., Example~17.F.2 and
  Proposition~17.F.3 in \citet{MasColellWhinstonGreen:95},
  Theorem~4.6.1 in \citet{KaratShr:98}, and \citet{Dana:93}.  Note
  that, contrary to the above references, we allow this condition to
  fail for one economic agent.
\end{Remark}

\section{Pareto optimal allocations}
\label{sec:pareto-optim-alloc}

In this section we state the properties of Pareto optimal allocations
needed for the proofs of Theorems~\ref{th:1} and~\ref{th:2}.

\begin{Definition}
  \label{def:2}
  An allocation $(\alpha^m)$ of $\Lambda$ is \emph{Pareto optimal} if
  \begin{equation}
    \label{eq:4}
    \mathbb{E}[\abs{U_m(\alpha^m)}] < \infty, \quad m=1,\dots,M, 
  \end{equation}
  and there is no allocation $(\beta^m)$ of $\Lambda$ which dominates
  $(\alpha^m)$ in the sense that
  \begin{displaymath}
    \mathbb{E}[U_m(\beta^m)] \geq \mathbb{E}[U_m(\alpha^m)],\quad
    m=1,\dots,M, 
  \end{displaymath}
  and
  \begin{displaymath}
    \mathbb{E}[U_{l}(\beta^{l})] >
    \mathbb{E}[U_{l}(\alpha^{l})] \quad\text{for some}\quad {l}\in
    \braces{1,\dots,M}. 
  \end{displaymath}
\end{Definition}

See Remark~\ref{rem:4} for a justification of the integrability
condition~\eqref{eq:4}.

As usual, in the study of complete equilibria, an important role is
played by the \emph{aggregate utility measurable field}
$\map{U=U(w,c)}{\mathbf{R}^M_{+}\times [0,\infty)}{\mathbf{L}^0}$:
\begin{equation}
  \label{eq:5}
  U(w,c) \set \sup\descr{\sum_{m=1}^M w^m U_m(c^m)}{c^m\geq 0, \quad
    c^1+\cdots+c^M=c}.
\end{equation}
Here $\mathbf{R}^M_+$ is the non-negative orthant in $\mathbf{R}^M$
without the origin:
\begin{displaymath}
  \mathbf{R}^M_+ \set \descr{w\in [0,\infty)^M}{\sum_{m=1}^M w^m >0}. 
\end{displaymath}
Due to the $1$-homogeneity property of $U=U(w,c)$ with respect to the
weight $w$: $U(yw,c) = yU(w,c)$, $y>0$, it is often convenient to
restrict its $w$-domain to the simplex
\begin{displaymath}
  \Sigma^M \set \descr{w\in [0,\infty)^M}{\sum_{m=1}^M w^m =1}.
\end{displaymath}

Elementary arguments show that for every $w\in \mathbf{R}^M_+$ the
measurable field $U(w,\cdot)$ on $[0,\infty)$ satisfies
Assumption~\ref{as:1} and that the upper bound in~\eqref{eq:5} is
attained at the measurable functions $\pi^m(w,c)$, $m=1,\dots,M$, such
that
\begin{align}
  \label{eq:6}
  w^m U_m'(\pi^m(w,c)) &= U_c(w,c) \set \frac{\partial}{\partial
    c}U(w,c) \quad \text{if} \quad w^m>0,\; c>0, \\
  \label{eq:7}
  \pi^m(w,c) &= 0 \quad \text{if} \quad w^m=0 \quad\text{or}\quad c=0.
\end{align}

These identities readily imply that the measurable fields
$\map{\pi^m}{\mathbf{R}^M_{+}\times [0,\infty)}{\mathbf{L}^0_+}$,
$m=1,\dots,M$, are continuous and have the following properties:

\begin{enumerate}[label=(A\arabic{*}), ref=(A\arabic{*})]
\item \label{item:1} They are $0$-homogeneous with respect to $w$:
  $\pi^m(yw,c) = \pi^m(w,c)$, $y>0$.
\item \label{item:2} For $c>0$ the measurable field $\pi^m(\cdot,c) =
  \pi^m(w^1,\dots,w^M,c)$ is strictly increasing with respect to $w^m$
  and strictly decreasing with respect to $w^l$, $l\not=m$.
\item \label{item:3} If $c>0$ and $w_1$ and $w_2$ are distinct
  vectors in $\Sigma^M$, then for every $s \in \mathbf{S}$ the vectors
  $(\pi^m(w_1,c)(s))$ and $(\pi^m(w_2,c)(s))$ in $c\Sigma^M$ are
  distinct.
\end{enumerate}

We state a version of the well-known parametrization of Pareto optimal
allocations by the elements of $\Sigma^M$.

\begin{Theorem}
  \label{th:3}
  Suppose that Assumption~\ref{as:1} holds. Let $\Lambda>0$ and
  $(\alpha^m)$ be an allocation of $\Lambda$ satisfying~\eqref{eq:4}.
  Then $(\alpha^m)$ is Pareto optimal if and only if there is $w\in
  \Sigma^M$ such that
  \begin{equation}
    \label{eq:8}
    \alpha^m = \pi^m(w,\Lambda), \quad m=1,\dots,M. 
  \end{equation}
\end{Theorem}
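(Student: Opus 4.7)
My plan is to prove the two implications separately.

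\emph{Sufficiency} ($\Leftarrow$). Suppose $\alpha^m = \pi^m(w,\Lambda)$ for some $w \in \Sigma^M$. The defining maximality of $\pi^m$ in~\eqref{eq:5}--\eqref{eq:7} gives the pointwise inequality $\sum_m w^m U_m(\beta^m) \leq U(w,\Lambda) = \sum_m w^m U_m(\alpha^m)$ for every allocation $(\beta^m)$ of $\Lambda$. I would argue by contradiction: assume some $(\beta^m)$ dominates $(\alpha^m)$ with strict utility gain at an index $l_0$. If $w^{l_0}>0$, integrating the pointwise inequality contradicts the strict weighted inequality produced by term-wise dominance. If $w^{l_0}=0$, property~\eqref{eq:7} forces $\alpha^{l_0}=0$, so strict monotonicity of $U_{l_0}$ forces $\beta^{l_0}>0$ on a set $A$ of positive measure; on $A$ one has $\sum_{l\neq l_0}\beta^l = \Lambda - \beta^{l_0} < \Lambda$, and strict monotonicity of $U(w,\cdot)$ upgrades the pointwise inequality to a strict one on $A$, yielding the same contradiction after integration.

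\emph{Necessity} ($\Rightarrow$). For this direction I plan to apply finite-dimensional Hahn-Banach separation. Define
\begin{displaymath}
V \set \descr{v \in \mathbf{R}^M}{v^m \leq \mathbb{E}[U_m(\beta^m)],\ m=1,\dots,M,\ \text{for some allocation }(\beta^m)\text{ of }\Lambda}.
\end{displaymath}
Concavity of the $U_m$ makes $V$ convex; by~\eqref{eq:4} it contains $v_0 \set (\mathbb{E}[U_m(\alpha^m)])_m$; and Pareto optimality of $(\alpha^m)$ means $V$ is disjoint from the open convex set $v_0 + (0,\infty)^M$. Separation yields a nonzero $w\in\mathbf{R}^M$ with $\sup_{v\in V} w\cdot v \leq \inf_{v\in v_0+(0,\infty)^M} w\cdot v$. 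Finiteness of the right-hand side forces $w\geq 0$; both sides then equal $w\cdot v_0$, and after normalization $w\in\Sigma^M$. Hence $(\alpha^m)$ maximizes $\sum_m w^m \mathbb{E}[U_m(\beta^m)]$ over all allocations of $\Lambda$.

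Testing this maximization against $\beta^m = \pi^m(w,\Lambda)$ and combining with the pointwise bound $\sum_m w^m U_m(\alpha^m) \leq U(w,\Lambda) = \sum_m w^m U_m(\pi^m(w,\Lambda))$ yields equality of the two integrals of a non-negative integrand, hence a.s.\ pointwise equality $\sum_m w^m U_m(\alpha^m) = U(w,\Lambda)$. The pointwise maximizer in~\eqref{eq:5} is unique: strict concavity of the $U_l$ pins down the coordinates with $w^l > 0$, while the Inada-implied strict monotonicity forces $\alpha^m = 0$ whenever $w^m = 0$, since otherwise reassigning that mass to an agent with positive weight would strictly raise the objective. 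Therefore $\alpha^m = \pi^m(w,\Lambda)$ a.s. The main obstacle, to my mind, is executing the separation cleanly in the presence of possibly infinite expectations (hence checking convexity of $V$ with the stated convention) and carefully handling the $w^m = 0$ case in both directions; the rest is standard convex-analysis bookkeeping.
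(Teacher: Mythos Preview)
Your proof is correct and follows essentially the same route as the paper's. For necessity, both you and the paper separate the utility set $V$ (the paper calls it $C$) from the Pareto-improvement region in $\mathbf{R}^M$, deduce $w\in\Sigma^M$, and then invoke pointwise uniqueness of the maximizer in~\eqref{eq:5}; for sufficiency, both rely on the pointwise inequality $\sum_m w^m U_m(\beta^m)\leq U(w,\Lambda)$, with your treatment of the $w^{l_0}=0$ case being slightly more explicit than the paper's one-line remark that $\pi^m(w,\Lambda)=0$ when $w^m=0$.
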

\begin{proof}
  Assume first that $(\alpha^m)$ is Pareto optimal. By the concavity
  of the measurable fields $(U_m)$, the set
  \begin{displaymath}
    C\set \descr{z\in \mathbf{R}^M}{z^m \leq \mathbb{E}[U_m(\beta^m)]
      \; \text{for some allocation $(\beta^m)$ of $\Lambda$}} 
  \end{displaymath}
  is convex and, in view of~\eqref{eq:4}, it has a non-empty
  interior. From the Pareto optimality of $(\alpha^m)$ we deduce that
  the point
  \begin{displaymath}
    \widehat z^m \set \mathbb{E}[U_m(\alpha^m)], \quad m=1\dots, M,
  \end{displaymath}
  belongs to the boundary of $C$. Hence, there is a non-zero $w\in
  \mathbf{R}^M$ such that
  \begin{equation}
    \label{eq:9}
    \sum_{m=1}^M w^m\widehat z^m  \geq  \sum_{m=1}^M w^m z^m, \quad z\in C. 
  \end{equation}
  Since $C-[0,\infty)^M = C$, we obtain that $w^m\geq 0$. Then we can
  normalize $w$ to be in $\Sigma^M$. Observe now that~\eqref{eq:9} can
  be written as
  \begin{align*}
    \mathbb{E}[\sum_{m=1}^M w^m U_m(\alpha^m)] &= \sup
    \descr{\mathbb{E}[\sum_{m=1}^M w^m
      U_m(\beta^m)]}{\text{$(\beta^m)$ is an allocation of
        $\Lambda$}} \\
    &= \mathbb{E}[U(w,\Lambda)],
  \end{align*}
  which readily implies~\eqref{eq:8}.

  Conversely, if $(\alpha^m)$ is given by~\eqref{eq:8}, then for any
  allocation $(\beta^m)$ of $\Lambda$
  \begin{align*}
    \sum_{m=1}^M w^m U_m(\beta^m) \leq U(w,\Lambda) = \sum_{m=1}^M w^m
    U_m(\alpha^m),
  \end{align*}
  which yields the Pareto optimality of $(\alpha^m)$ after we account
  for the integrability condition~\eqref{eq:4} and recall that
  $\alpha^m\set \pi^m(w,\Lambda)=0$ if $w^m=0$.
\end{proof}

\begin{Remark}
  \label{rem:4}
  Without the integrability condition~\eqref{eq:4} in
  Definition~\ref{def:2} the assertion of Theorem~\ref{th:3} does not
  hold. As a counter-example, take $M=2$ and select the total
  endowment and the utility functions so that $\Lambda>0$, $U_1=U_2$,
  and
  \begin{displaymath}
    \mathbb{E}[\abs{U_i(\Lambda)}] < \infty \quad\text{and}\quad
    \mathbb{E}[U_i(\Lambda/2)] = - \infty.
  \end{displaymath}
  Then the allocation $(\pi_1(1/2,\Lambda),\pi_2(1/2,\Lambda)) =
  (\Lambda/2,\Lambda/2)$ is dominated by either $(0,\Lambda)$ or
  $(\Lambda,0)$.
\end{Remark}

After these preparations, we are ready to state the main result of
this section.

\begin{Theorem}
  \label{th:4}
  Suppose that Assumption~\ref{as:1} holds, the total endowment
  $\Lambda>0$, and the initial allocation $\alpha^m_0\not=0$,
  $m=1,\dots,M$. Then $(\zeta,(\alpha^m_1))$ is an Arrow-Debreu
  equilibrium if and only if there is $w\in \interior{\Sigma^M}$, the
  interior of $\Sigma^M$, and a constant $z>0$ such that
  \begin{align}
    \label{eq:10}
    \zeta &= z U_c(w,\Lambda),
    \\
    \label{eq:11}
    \alpha^m_1 &= \pi^m(w,\Lambda), \quad m=1,\dots,M,
  \end{align}
  and
  \begin{gather}
    \label{eq:12}
    \mathbb{E}[\sum_{m=1}^M \abs{U_m(\pi^m(w,\Lambda))} +
    U_c(w,\Lambda)\Lambda] < \infty, \\
    \label{eq:13}
    \mathbb{E}[U_c(w,\Lambda)(\pi^m(w,\Lambda) - \alpha^m_0)]=0, \quad
    m=1,\dots,M.
  \end{gather}
  In this case, the equilibrium allocation $(\alpha^m_1)$ is Pareto
  optimal.
\end{Theorem}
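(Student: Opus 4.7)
The plan is to pin down the equilibrium via Theorem~\ref{th:3} and then recover the scalar $z$ from the individual first-order conditions. For the ``only if'' direction, I would first invoke the first welfare theorem to conclude that any equilibrium allocation $(\alpha^m_1)$ is Pareto optimal: if $(\beta^m)$ weakly dominated it, local non-satiation (strict monotonicity of $U_m$) would force $\mathbb{E}[\zeta\beta^m] \geq \mathbb{E}[\zeta\alpha^m_1]$ for every $m$ with strict inequality for the strictly improving agent; summing and using $\sum_m\beta^m=\sum_m\alpha^m_1=\Lambda$ would yield $\mathbb{E}[\zeta\Lambda]>\mathbb{E}[\zeta\Lambda]$, a contradiction. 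Theorem~\ref{th:3} then supplies $w\in\Sigma^M$ with $\alpha^m_1 = \pi^m(w,\Lambda)$.

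Next, I would argue $w\in\interior\Sigma^M$: since $\alpha^m_0\neq 0$ and $\zeta>0$ force $\mathbb{E}[\zeta\alpha^m_0]>0$, a standard Inada-based perturbation of~\eqref{eq:1} rules out $\alpha^m_1=0$ on any positive-measure subset of $\{\Lambda>0\}$; by~\eqref{eq:7} this gives $w^m>0$ for every $m$. The first-order condition for agent $m$'s concave program then reads $U'_m(\alpha^m_1) = \lambda_m\zeta$ on $\{\Lambda>0\}$ for some multiplier $\lambda_m>0$. Substituting $\alpha^m_1 = \pi^m(w,\Lambda)$ into~\eqref{eq:6} gives $\lambda_m w^m \zeta = U_c(w,\Lambda)$; as the right-hand side is $m$-independent, so is the product $\lambda_m w^m$, and setting $z\set 1/(\lambda_m w^m)$ delivers~\eqref{eq:10}. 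The integrability~\eqref{eq:12} then follows from $\mathbb{E}[\zeta\Lambda]<\infty$ and Definition~\ref{def:1}, and~\eqref{eq:13} is~\eqref{eq:1} scaled by $1/z$.

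For the ``if'' direction, given $w\in\interior\Sigma^M$ and $z>0$ satisfying~\eqref{eq:10}--\eqref{eq:13}, I would verify~\eqref{eq:1} by a dual-variable argument. For any $\alpha\in\mathbf{L}^0_+$ with $\mathbb{E}[\zeta(\alpha-\alpha^m_0)]=0$, concavity of $U_m$ combined with~\eqref{eq:6} and~\eqref{eq:10} yields
\begin{displaymath}
  U_m(\alpha) - U_m(\alpha^m_1) \leq U'_m(\alpha^m_1)(\alpha - \alpha^m_1) = \frac{\zeta(\alpha-\alpha^m_1)}{zw^m}.
\end{displaymath}
Since $\mathbb{E}[\zeta\alpha] = \mathbb{E}[\zeta\alpha^m_0] \leq \mathbb{E}[\zeta\Lambda] < \infty$, the positive part of the right side is integrable, and~\eqref{eq:13} then gives $\mathbb{E}[U_m(\alpha)] \leq \mathbb{E}[U_m(\alpha^m_1)]$ under the $-\infty$ convention; strict concavity of $U_m$ yields uniqueness. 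Pareto optimality of $(\alpha^m_1)$ is immediate from Theorem~\ref{th:3} and~\eqref{eq:11}.

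The main obstacle is the bookkeeping around the $-\infty$ convention: the dominating $(\beta^m)$ in the first-welfare step and the challenger $\alpha$ in the dual-variable step need not have integrable utility, and in each case the resolution is to bound the positive parts systematically via $\mathbb{E}[\zeta\Lambda]<\infty$. A subtler point, which the identity~\eqref{eq:6} is tailored to resolve, is that the agent-specific products $\lambda_m w^m$ must coalesce into the single scalar $z$; this is precisely what decouples the equilibrium price from the individual agent's Lagrange multiplier.
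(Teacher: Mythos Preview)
Your architecture coincides with the paper's: first welfare theorem $\Rightarrow$ Pareto optimality $\Rightarrow$ Theorem~\ref{th:3} gives $w\in\Sigma^M$; then $w\in\interior\Sigma^M$; then the individual first-order conditions combined with~\eqref{eq:6} give~\eqref{eq:10}; and the converse via the concavity (Fenchel) inequality. So the plan is right.

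The genuine gap is the step you present as routine: ``the first-order condition for agent $m$'s concave program then reads $U'_m(\alpha^m_1) = \lambda_m\zeta$''. In $\mathbf{L}^0_+$ over a merely $\sigma$-finite measure space, with no a~priori integrability on $U'_m(\alpha^m_1)$, this does not follow from a one-line variational argument; it is precisely the content of the paper's Lemma~\ref{lem:1}. There one must first manufacture a test function $\beta$ with $0<\beta<\alpha$, $\mathbb{E}[U(\alpha-\beta)]>-\infty$, and $\mathbb{E}[U'(\alpha)\beta]<\infty$, so that dominated convergence licenses differentiation of $y\mapsto\mathbb{E}[U(\alpha+y\gamma)]$ for all $\abs{\gamma}\leq\beta$ with $\mathbb{E}[\zeta\gamma]=0$; only then does a Radon--Nikodym style identification give $\zeta\propto U'(\alpha)$. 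You locate the ``subtler point'' in the coalescence of the products $\lambda_m w^m$ into a single $z$, but that coalescence is immediate from~\eqref{eq:6} once each individual proportionality is known; the work is in getting the proportionality at all.

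A smaller remark: your route to $w\in\interior\Sigma^M$ via an ``Inada-based perturbation'' is harder than necessary. Since $\alpha^m_1$ lies in the budget set, $\mathbb{E}[\zeta\alpha^m_1]=\mathbb{E}[\zeta\alpha^m_0]>0$, so $\alpha^m_1\neq 0$; then~\eqref{eq:7} forces $w^m>0$, and~\eqref{eq:6} a posteriori gives $\alpha^m_1>0$ everywhere. This is the paper's order of operations and avoids the integrability bookkeeping that an Inada perturbation would itself require.
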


\begin{Remark}
  \label{rem:5}
  As Lemma~\ref{lem:2} below shows, the integrability
  condition~\eqref{eq:12} holds for some $w\in \interior{\Sigma^M}$ if
  and only if it holds for every $w\in \interior{\Sigma^M}$ and is
  equivalent to the existence of an allocation $(\alpha^m)$ of
  $\Lambda$ satisfying~\eqref{eq:2}.
\end{Remark}

For the proof of the theorem we need a lemma.

\begin{Lemma}
  \label{lem:1}
  Let $\map{U}{[0,\infty)}{\mathbf{L}^0}$ be a measurable field
  satisfying Assumption~\ref{as:1} and $\alpha>0$ and $\zeta>0$ be
  measurable functions such that $\mathbb{E}[\zeta \alpha]<\infty$,
  and
  \begin{displaymath}
    -\infty < \mathbb{E}[U(\alpha)] =
    \sup\descr{\mathbb{E}[U(\beta)]}{\beta\in \mathbf{L}^0_+, \;
      \mathbb{E}[\zeta(\beta - \alpha)] = 0} < \infty. 
  \end{displaymath}
  Then
  \begin{equation}
    \label{eq:14}
    \zeta  =  \frac{\mathbb{E}[\zeta
      \alpha]}{\mathbb{E}[U'(\alpha)\alpha]} U'(\alpha).  
  \end{equation}
\end{Lemma}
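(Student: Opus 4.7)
The plan is the classical variational/first-order argument for utility maximization under a linear budget constraint. I would show that $U'(\alpha) = \lambda \zeta$ $\mu$-almost everywhere for some Lagrange multiplier $\lambda > 0$, and then recover $\lambda$ by pairing with $\alpha$ and using the hypothesis $\mathbb{E}[\zeta \alpha] < \infty$.

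To perform the variations despite the non-finiteness of $\mu$, I would first localize. Using $\sigma$-finiteness of $\mu$ together with $0 < \alpha, \zeta < \infty$ and continuity of $U'$, one can exhaust $\mathbf{S}$ by an increasing sequence of measurable sets $G_n$ on each of which $\mu$ is finite and $\alpha$, $1/\alpha$, $\zeta$, $U'(\alpha)$ are bounded. Call a measurable subset of some $G_n$ \emph{good}. For any two good sets $A$ and $B$, choose constants $c_A, c_B > 0$ such that $\eta \set c_A \mathbf{1}_A - c_B \mathbf{1}_B$ satisfies $\mathbb{E}[\zeta \eta] = 0$. For $|\epsilon|$ small, $\alpha + \epsilon \eta \in \mathbf{L}^0_+$ lies in the admissible set. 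Concavity of $U$ gives the two-sided bound
\begin{displaymath}
  U'(\alpha + \epsilon \eta)\,\epsilon \eta \;\leq\; U(\alpha + \epsilon \eta) - U(\alpha) \;\leq\; U'(\alpha)\,\epsilon \eta,
\end{displaymath}
and the uniform boundedness on the support $A \cup B$ allows dominated convergence as $\epsilon \to 0^\pm$. The optimality of $\alpha$ then forces $\mathbb{E}[U'(\alpha)\eta] = 0$, i.e.,
\begin{displaymath}
  \frac{\mathbb{E}[U'(\alpha)\mathbf{1}_A]}{\mathbb{E}[\zeta \mathbf{1}_A]} \;=\; \frac{\mathbb{E}[U'(\alpha)\mathbf{1}_B]}{\mathbb{E}[\zeta \mathbf{1}_B]}.
\end{displaymath}

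Fixing one good set $B_0$ and letting $\lambda$ denote this common ratio, one obtains $\mathbb{E}[(U'(\alpha) - \lambda \zeta)\mathbf{1}_A] = 0$ for every good $A$. Since $U'(\alpha)$ and $\zeta$ are bounded on each $G_n$ of finite $\mu$-measure, letting $A$ range over arbitrary measurable subsets of $G_n$ forces $U'(\alpha) = \lambda \zeta$ $\mu$-a.e.\ on $G_n$, hence on all of $\mathbf{S}$ after $G_n \uparrow \mathbf{S}$. Multiplying by $\alpha$ and integrating gives $\mathbb{E}[U'(\alpha)\alpha] = \lambda\, \mathbb{E}[\zeta \alpha]$; both sides are positive, and the right side is finite by assumption, so solving for $\lambda$ and substituting back yields~\eqref{eq:14}.

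The main obstacle is the integrability bookkeeping: $U(\alpha)$ and $U'(\alpha)$ are only pointwise well-defined and need not be globally integrable, so the admissible perturbations must be strictly local, and the lift from the local first-order identities to the global proportionality $U'(\alpha) = \lambda \zeta$ relies on a careful $\sigma$-finite exhaustion. A secondary subtlety is that $\mathbb{E}[U'(\alpha)\alpha] < \infty$ is not an a priori input but is forced out of the proportionality together with $\mathbb{E}[\zeta\alpha] < \infty$.
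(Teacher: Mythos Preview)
Your variational first-order argument is correct and is exactly the strategy the paper follows; the only difference is in the localization device. Rather than exhausting $\mathbf{S}$ by sets $G_n$ on which $\alpha$, $1/\alpha$, $\zeta$, $U'(\alpha)$ are bounded, the paper builds a single strictly positive function $\beta<\alpha$ with $\mathbb{E}[U(\alpha-\beta)]>-\infty$ (which forces $\mathbb{E}[U'(\alpha)\beta]<\infty$ via concavity) and $\mathbb{E}[\zeta\beta]<\infty$, and then allows as test directions all measurable $\gamma$ with $\abs{\gamma}\leq\beta$ and $\mathbb{E}[\zeta\gamma]=0$. The dominated-convergence bound comes from $U(\alpha)-U(\alpha-\beta)$ rather than from local boundedness. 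Having obtained $\mathbb{E}[U'(\alpha)\gamma]=0$ for all such $\gamma$, the paper finishes by comparing the two probability measures with densities proportional to $\zeta\beta$ and $U'(\alpha)\beta$ on indicators, which is the same ratio argument you make but packaged globally. Your exhaustion route is equally valid and arguably more elementary; the paper's single-$\beta$ construction is a little slicker in that it sidesteps the limit $G_n\uparrow\mathbf{S}$ and the need to control $U'(\alpha+\epsilon\eta)$ near the lower edge of $\alpha$ on each $G_n$.
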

\begin{proof}
  Let $\beta$ be a measurable function such
  that 
  \begin{displaymath}
    0<\beta<\alpha \quad \text{and} \quad
    \mathbb{E}[U(\alpha-\beta)]>-\infty.
  \end{displaymath}
  For instance, we can choose $\beta$ so that
  \begin{displaymath}
    U(\alpha) - U(\alpha-\beta) = \min(\eta, U(2\alpha)-U(\alpha)), 
  \end{displaymath}
  where the measurable function $\eta>0$ and
  $\mathbb{E}[\eta]<\infty$.  Observe that
  \begin{equation}
    \label{eq:15}
    \mathbb{E}[\zeta \beta]<\infty \quad \text{and} \quad
    \mathbb{E}[U'(\alpha)\beta]<\infty, 
  \end{equation}
  where the second bound holds because
  \begin{displaymath}
    U'(\alpha)\beta \leq U(\alpha) - U(\alpha-\beta). 
  \end{displaymath}

  Take a measurable function $\gamma$ such that
  \begin{displaymath}
    \abs{\gamma}\leq \beta \quad \text{and} \quad
    \mathbb{E}[\zeta\gamma] = 0. 
  \end{displaymath}
  From the properties of $U$ in Assumption~\ref{as:1} we deduce that
  for $\abs{y}<1$:
  \begin{align*}
    U'(\alpha + y\gamma/2) \abs{\gamma}/2 &\leq U'(\alpha - \beta/2)
    {\beta}/2 \\
    &\leq U(\alpha-\beta/2) - U(\alpha-\beta) \\
    &\leq U(\alpha) - U(\alpha-\beta).
  \end{align*}
  Accounting for the integrability of $U(\alpha)$ and
  $U(\alpha-\beta)$ we obtain that the function
  \begin{displaymath}
    f(y) \set \mathbb{E}[U(\alpha+y\gamma/2)], \quad y\in
    (-1,1), 
  \end{displaymath}
  is continuously differentiable and
  \begin{displaymath}
    f'(y) = \mathbb{E}[U'(\alpha+y\gamma/2)\gamma/2], \quad y\in
    (-1,1). 
  \end{displaymath}
  As $\alpha$ is optimal, $f$ attains its maximum at $y=0$. Therefore,
  \begin{displaymath}
    f'(0) = \mathbb{E}[U'(\alpha)\gamma] = 0. 
  \end{displaymath}

  Thus, we have found a measurable function $\beta>0$ such
  that~\eqref{eq:15} holds and
  \begin{equation}
    \label{eq:16}
    \forall \gamma\in \mathbf{L}^0:\quad \abs{\gamma}\leq \beta \quad
    \text{and} \quad 
    \mathbb{E}[\zeta\gamma] = 0 \quad \Rightarrow \quad
    \mathbb{E}[U'(\alpha)\gamma] = 0. 
  \end{equation}
  This readily implies~\eqref{eq:14}. Indeed, if we define the
  probability measures $\mathbb{P}$ and $\mathbb{Q}$ as
  \begin{displaymath}
    \frac{d\mathbb{P}}{d\mu} \set \frac{\zeta \beta}{\mathbb{E}[\zeta
      \beta]} \quad \text{and} \quad \frac{d\mathbb{Q}}{d\mu} \set
    \frac{U'(\alpha) \beta}{\mathbb{E}[U'(\alpha) \beta]},
  \end{displaymath}
  then for a set $A\in \mathcal{S}$ the implication~\eqref{eq:16} with
  $\gamma = \beta(\setind{A} - \mathbb{P}[A])$ yields that $\mathbb{P}[A]
  = \mathbb{Q}[A]$. Hence, $\mathbb{P} = \mathbb{Q}$ and the result
  follows.
\end{proof}

\begin{proof}[Proof of Theorem~\ref{th:4}.] Assume first that
  $(\zeta,(\alpha^m_1))$ is an Arrow-Debreu equilibrium.  We claim
  that $(\alpha^m_1)$ is a Pareto optimal allocation of
  $\Lambda$. Indeed, if there is an allocation $(\beta^m)$ of
  $\Lambda$ which dominates $(\alpha^m_1)$ in the sense of
  Definition~\ref{def:2} then for some $l\in \braces{1,\dots,M}$
  either
  \begin{displaymath}
    \mathbb{E}[U_l(\beta^l)] \geq \mathbb{E}[U_l(\alpha^l_1)]
    \quad\text{and}\quad \mathbb{E}[\zeta\beta^l] < \mathbb{E}[\zeta
    \alpha^l_1] 
  \end{displaymath}
  or
  \begin{displaymath}
    \mathbb{E}[U_l(\beta^l)] > \mathbb{E}[U_l(\alpha^l_1)]
    \quad\text{and}\quad \mathbb{E}[\zeta\beta^l] \leq
    \mathbb{E}[\zeta \alpha^l_1], 
  \end{displaymath}
  contradicting the optimality of $\alpha^l_1$ in~\eqref{eq:1}.

  From Theorem~\ref{th:3} we then obtain the
  representation~\eqref{eq:11} for $(\alpha^m_1)$ with $w\in
  \Sigma^M$. As $\alpha^m_0\not=0$, we have $\alpha^m_1\not=0$. Hence,
  $w\in \interior{\Sigma^M}$ and $\alpha^m_1 = \pi^m(w,\Lambda)>0$.
  Lemma~\ref{lem:1} and the identities~\eqref{eq:6} now
  imply~\eqref{eq:10}. Finally, the properties~\eqref{eq:12}
  and~\eqref{eq:13} are parts of Definition~\ref{def:1}.

  Conversely, let $w\in \interior{\Sigma^M}$ be such that the
  conditions \eqref{eq:12} and \eqref{eq:13} hold and define $(\zeta,
  (\alpha^m_1))$ by \eqref{eq:10} and
  \eqref{eq:11}. Except~\eqref{eq:1} all conditions of
  Definition~\ref{def:1} hold trivially. The property~\eqref{eq:1}
  follows from the inequalities
  \begin{align*}
    U_m(\beta^m) - \frac1{w^m}U_c(w,\Lambda) \beta^m &\leq
    \sup_{c\geq 0} \braces{U_m(c) - \frac1{w^m}U_c(w,\Lambda)c} \\
    &= U_m(\pi^m(w,\Lambda)) - \frac1{w^m} U_c(w,\Lambda)
    \pi^m(w,\Lambda),
  \end{align*}
  where $\beta^m\in \mathbf{L}^0_+$ and, at the last step, we used the
  definition of $\pi^m$ in~\eqref{eq:6}.
\end{proof}

\section{Proofs of Theorems~\ref{th:1} and \ref{th:2}}
\label{sec:existence}

We begin with some lemmas.

\begin{Lemma}
  \label{lem:2}
  Suppose that Assumption~\ref{as:1} holds, the total endowment
  $\Lambda>0$, and there is an allocation $(\alpha^m)$ of $\Lambda$
  satisfying~\eqref{eq:2}. Then
  \begin{gather}
    \label{eq:17}
    \mathbb{E}[\abs{U_m(\pi^m(w,\Lambda))}] < \infty, \quad
    w\in \interior{\Sigma^M}, \; m=1,\dots,M, \\
    \label{eq:18}
    \mathbb{E}[\sup_{w\in \Sigma^M}U_c(w,\Lambda)\Lambda] < \infty.
  \end{gather}
\end{Lemma}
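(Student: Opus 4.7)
The plan is to first establish \eqref{eq:18}, then derive \eqref{eq:17} from it. The identity $w^m U'_m(\pi^m(w,\Lambda)) = U_c(w,\Lambda)$ from \eqref{eq:6} will serve as the key bridge between the aggregate marginal utility $U_c(w,\Lambda)$ and the individual marginal utilities $U'_m$ at the given allocation $(\alpha^m)$.

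For \eqref{eq:18}, the main idea is a pointwise pigeonhole argument. Fix $w\in\Sigma^M$ and $s\in\mathbf{S}$. Since $\sum_m \pi^m(w,\Lambda)(s) = \Lambda(s) = \sum_m \alpha^m(s)$, some index $m_0 = m_0(w,s)$ satisfies $\pi^{m_0}(w,\Lambda)(s) \geq \alpha^{m_0}(s)$. Monotonicity of $U'_{m_0}$ together with \eqref{eq:6} then yields
\begin{displaymath}
  U_c(w,\Lambda)(s) = w^{m_0} U'_{m_0}(\pi^{m_0}(w,\Lambda))(s) \leq U'_{m_0}(\alpha^{m_0})(s) \leq \sum_{m=1}^M U'_m(\alpha^m)(s).
\end{displaymath}
Taking the supremum over $w\in\Sigma^M$ (continuity of $\pi^m(\cdot,\Lambda)$ reduces this to a countable dense subset, preserving measurability), multiplying by $\Lambda$, and integrating gives \eqref{eq:18} directly from \eqref{eq:2}.

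For \eqref{eq:17}, fix $w\in\interior{\Sigma^M}$ and $m$, and split $U_m(\pi^m(w,\Lambda))$ into positive and negative parts. Since $\pi^m(w,\Lambda)\leq\Lambda$ and $U_m$ is increasing, the supporting-hyperplane inequality $U_m(\Lambda) \leq U_m(\alpha^m) + U'_m(\alpha^m)(\Lambda - \alpha^m) \leq U_m(\alpha^m) + U'_m(\alpha^m)\Lambda$ gives
\begin{displaymath}
  U_m(\pi^m(w,\Lambda))^+ \leq U_m(\Lambda)^+ \leq \abs{U_m(\alpha^m)} + U'_m(\alpha^m)\Lambda,
\end{displaymath}
integrable by \eqref{eq:2}. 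For the negative part, concavity at $\pi^m(w,\Lambda)$ yields $U_m(\pi^m(w,\Lambda)) \geq U_m(\alpha^m) - U'_m(\pi^m(w,\Lambda))\alpha^m$ (dropping the non-negative term $U'_m(\pi^m(w,\Lambda))\pi^m(w,\Lambda)$). Replacing $U'_m(\pi^m(w,\Lambda))$ by $U_c(w,\Lambda)/w^m$ via \eqref{eq:6} and using $\alpha^m\leq\Lambda$ then produces
\begin{displaymath}
  U_m(\pi^m(w,\Lambda))^- \leq \abs{U_m(\alpha^m)} + \frac{1}{w^m} U_c(w,\Lambda) \Lambda,
\end{displaymath}
integrable by \eqref{eq:2} and the already-established \eqref{eq:18}.

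The main conceptual obstacle is that the hypothesis supplies only a \emph{single} allocation $(\alpha^m)$, not assumed Pareto optimal, yet \eqref{eq:18} demands a bound on $U_c(w,\Lambda)$ that is \emph{uniform} in $w\in\Sigma^M$; the pigeonhole argument comparing $(\pi^m(w,\Lambda))$ with $(\alpha^m)$ dissolves this almost for free. A secondary point is that the factor $1/w^m$ in the negative-part bound forces $w^m > 0$, which is precisely why \eqref{eq:17} is only asserted for $w\in\interior{\Sigma^M}$.
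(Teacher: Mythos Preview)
Your proof is correct and your argument for \eqref{eq:18} is essentially the paper's: the same pointwise pigeonhole comparison of $(\pi^m(w,\Lambda))$ against $(\alpha^m)$, followed by $U_c(w,\Lambda)\leq \sum_m U'_m(\alpha^m)$. (A small implicit step you rely on is that \eqref{eq:2} together with $\Lambda>0$ forces $\alpha^m>0$ a.e., so the selected index automatically has $w^{m_0}>0$ and \eqref{eq:6} applies; the paper instead explicitly chooses $m(s)$ with $\pi^{m(s)}>0$.)

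Where you diverge is in the lower bound for \eqref{eq:17}. The paper does not use \eqref{eq:18} for this; instead it invokes the defining optimality of $(\pi^m(w,\Lambda))$ in \eqref{eq:5}, namely $\sum_m w^m U_m(\alpha^m)\leq \sum_m w^m U_m(\pi^m(w,\Lambda))$, and combines this with the termwise upper bound to squeeze integrability of each $U_m(\pi^m(w,\Lambda))$. Your route---concavity at $\pi^m(w,\Lambda)$, then \eqref{eq:6} and $\alpha^m\leq\Lambda$ to reach $U_m(\pi^m(w,\Lambda))\geq U_m(\alpha^m)-\tfrac{1}{w^m}U_c(w,\Lambda)\Lambda$---is equally valid but requires \eqref{eq:18} as input, which is why you reversed the order of the two parts. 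The paper's order is logically independent; yours has the mild advantage of making the role of the restriction $w\in\interior{\Sigma^M}$ completely explicit through the factor $1/w^m$.
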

\begin{proof}
  First, take $w\in \interior{\Sigma^M}$. From the monotonicity and
  the concavity of $U_m$ we deduce that
  \begin{displaymath}
    U_m(\pi^m(w,\Lambda)) \leq U_m(\Lambda) \leq U_m(\alpha^m) +
    U'_m(\alpha^m)\Lambda 
  \end{displaymath}
  and from the definition of $\pi^m$ that
  \begin{displaymath}
    \sum_{m=1}^M w^m U_m(\alpha^m) \leq U(w,\Lambda) = \sum_{m=1}^M
    w^m U_m(\pi^m(w,\Lambda)). 
  \end{displaymath}
  These inequalities readily imply~\eqref{eq:17}.

  Assume now that $w\in \Sigma^M$. Since
  \begin{displaymath}
    0< \Lambda = \sum_{m=1}^M \pi^m(w,\Lambda) = \sum_{m=1}^M \alpha^m, 
  \end{displaymath}
  for every $s\in \mathbf{S}$ there is an index $m(s)$ such that
  \begin{displaymath}
    0< \pi^{m(s)}(w,\Lambda)(s) \quad \text{and} \quad
    \alpha^{m(s)}(s) \leq \pi^{m(s)}(w,\Lambda)(s).  
  \end{displaymath}
  Accounting for~\eqref{eq:6} and~\eqref{eq:7} we deduce that
  \begin{displaymath}
    U_c(w,\Lambda) \leq \max\descr{w^m U'_m(\alpha^m)}{w^m>0}
    \leq \sum_{m=1}^M U'_m(\alpha^m), 
  \end{displaymath}
  which implies~\eqref{eq:18}.
\end{proof}

The following corollary of the Brouwer's fixed point theorem plays a
key role. It is well-known, see, e.g., Theorem~6.3.6 in
\citet{DanJeanb:03}.

\begin{Lemma}
  \label{lem:3}
  Let $\map{f_m}{\Sigma^M}{\mathbf{R}}$, $m=1,\dots,M$, be continuous
  functions such that
  \begin{gather}
    \label{eq:19}
    f_m(w) <0 \quad\text{if}\quad w^m=0, \\
    \label{eq:20}
    \sum_{m=1}^M f_m(w) = 0, \quad w\in \Sigma^M.
  \end{gather}
  Then there is $\widehat w\in \interior{\Sigma^M}$ such that
  \begin{equation}
    \label{eq:21}
    f_m(\widehat w) = 0, \quad m=1,\dots,M, 
  \end{equation}  
\end{Lemma}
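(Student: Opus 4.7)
The plan is to apply Brouwer's fixed point theorem to a carefully chosen continuous self-map of $\Sigma^M$ whose fixed points are forced by~\eqref{eq:19} and~\eqref{eq:20} to lie in the interior and to be common zeros of the $f_m$'s.

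Concretely, I would define $\map{T}{\Sigma^M}{\Sigma^M}$ by
\[
  T(w)^m \set \frac{w^m + \max(-f_m(w), 0)}{1 + \sum_{l=1}^M \max(-f_l(w), 0)}, \qquad m=1,\dots,M.
\]
The denominator is at least $1$, every numerator is non-negative, and a direct computation gives $\sum_m T(w)^m = 1$, so $T$ maps $\Sigma^M$ into itself; continuity of the $f_m$'s yields continuity of $T$. Brouwer's theorem then supplies a fixed point $\widehat w\in\Sigma^M$, and rearranging $T(\widehat w)=\widehat w$ produces the key identity
\[
  \widehat w^m\, S = \max(-f_m(\widehat w), 0),\qquad m=1,\dots,M, \qquad S\set\sum_{l=1}^M \max(-f_l(\widehat w), 0).
\]

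From this identity I would first argue $\widehat w\in\interior\Sigma^M$: if $\widehat w^m=0$ for some $m$, then the left-hand side vanishes, forcing $f_m(\widehat w)\geq 0$, in direct contradiction to~\eqref{eq:19}. Next I would argue $S=0$: otherwise, since every $\widehat w^m>0$, the identity yields $\max(-f_m(\widehat w),0)>0$, i.e.\ $f_m(\widehat w)<0$, for every $m$, contradicting~\eqref{eq:20}. Hence $S=0$, so $f_m(\widehat w)\geq 0$ for all $m$, and~\eqref{eq:20} promotes this to the desired equality~\eqref{eq:21}.

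The only genuine obstacle is picking the right self-map; everything downstream is a short case split on the fixed point identity. The crucial design choice is to use $\max(-f_m,0)$ rather than $\max(f_m,0)$ in the construction of $T$, since this is precisely what makes the boundary condition~\eqref{eq:19} incompatible with boundary fixed points and thus eliminates the usual worry that Brouwer might only return a boundary solution.
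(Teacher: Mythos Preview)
Your proof is correct and follows essentially the same route as the paper: both construct the identical self-map $T(w)^m=\dfrac{w^m+\max(0,-f_m(w))}{1+\sum_l\max(0,-f_l(w))}$, invoke Brouwer, rule out boundary fixed points via~\eqref{eq:19}, and then use~\eqref{eq:20} to upgrade ``all $f_m\geq 0$'' to ``all $f_m=0$''. Your bookkeeping via the identity $\widehat w^m S=\max(-f_m(\widehat w),0)$ is a slight repackaging of the paper's argument (which instead picks an index $l$ with $f_l(\widehat w)\geq 0$ and reads off $S=0$ from the $l$-th coordinate), but the substance is the same.
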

\begin{proof}
  Since
  \begin{displaymath}
    g_m(w) \set \frac{w^m + \max(0,-f_m(w))}{1 + \sum_{n=1}^M
      \max(0,-f_n(w))}, \quad m=1,\dots,M,  
  \end{displaymath}
  are continuous maps of $\Sigma^M$ into $\Sigma^M$, the Brouwer's
  fixed point theorem implies the existence of $\widehat w\in
  \Sigma^M$ such that
  \begin{displaymath}
    \widehat w^m = g_m(\widehat w) =  \frac{\widehat{w}^m +
      \max(0,-f_m(\widehat w))}{1 + \sum_{n=1}^M
      \max(0,-f_n(\widehat w))}, \quad m=1,\dots,M,
  \end{displaymath}

  In view of~\eqref{eq:19}, we obtain that $\widehat w \in
  \interior{\Sigma^M}$. From \eqref{eq:20} we deduce the existence of
  an index $l$ such that $f_l(\widehat w)\geq 0$. Then
  \begin{displaymath}
    0 < \widehat w^l = \frac{\widehat{w}^l}{1 + \sum_{m=1}^M
      \max(0,-f_m(\widehat w))}, 
  \end{displaymath}
  implying that $f_m(\widehat w) \geq 0$, $m=1,\dots,M$. Another use
  of~\eqref{eq:20} yields the result.
\end{proof}

\begin{proof}[Proof of Theorem~\ref{th:1}.]  The ``only if'' part
  follows directly from Theorem~\ref{th:4}, which, in particular,
  implies that the inequality~\eqref{eq:2} holds for any equilibrium
  allocation $(\alpha^m_1)$.

  Conversely, assume that~\eqref{eq:2} holds for some allocation
  $(\alpha^m)$ of $\Lambda$. From Lemma~\ref{lem:2} we obtain the
  bound~\eqref{eq:18}. The dominated convergence theorem and the
  continuity of the measurable fields $\pi^m(\cdot,\Lambda)$ and
  $U_c(\cdot,\Lambda)$ on $\Sigma^M$ imply the continuity of the
  functions
  \begin{displaymath}
    f_m(w) \set \mathbb{E}[U_c(w,\Lambda)(\pi^m(w,\Lambda) -
    \alpha^m_0)],\quad w\in \Sigma^M.  
  \end{displaymath}
  In view of~\eqref{eq:7} and as $\alpha^m_0\not=0$ the functions
  $(f_m)$ satisfy~\eqref{eq:19} and since
  \begin{displaymath}
    \sum_{m=1}^M \alpha^m_0 = \sum_{m=1}^M \pi^m(w,\Lambda) = \Lambda, 
  \end{displaymath}
  they also satisfy~\eqref{eq:20}.

  Lemma~\ref{lem:3} then implies the existence of $\widehat w\in
  \interior{\Sigma^M}$ such that~\eqref{eq:21} holds.  The result now
  follows from Theorem~\ref{th:4} where the integrability
  condition~\eqref{eq:12} for $w=\widehat w$ holds because of
  Lemma~\ref{lem:2}.
\end{proof}

\begin{proof}[Proof of Theorem~\ref{th:2}.]
  The existence follows from Corollary~\ref{cor:1}.  To verify the
  uniqueness we define the \emph{excess utility} functions:
  \begin{align*}
    h_m(w) &\set \mathbb{E}[U'_m(\pi^m(w,\Lambda))(\pi^m(w,\Lambda) -
    \alpha^m_0)] \\
    &= \frac1{w^m}\mathbb{E}[U_c(w,\Lambda)(\pi^m(w,\Lambda) -
    \alpha^m_0)],\quad w\in \interior{\mathbf{R}_+^M}.
  \end{align*}
  By Lemma~\ref{lem:2}, these functions are well-defined and finite.
  The condition~\eqref{eq:3}, the properties~\ref{item:1}
  and~\ref{item:2} for $\pi^m=\pi^m(w,c)$, and the fact that the
  stochastic field $U'_m=U'_m(c)$ is strictly decreasing imply that
  \begin{enumerate}[label=(B\arabic{*}), ref=(B\arabic{*})]
    \setcounter{enumi}{\value{item}}
  \item \label{item:4} For every $m=1,\dots,M$, the function $h_m$ is
    $0$-homogeneous: $h_m(yw) = h_m(w)$, $w\in
    \interior{\mathbf{R}^M_+}$, $y>0$.
  \item \label{item:5} For $m\not=m_0$, where $m_0$ is the index for
    which~\eqref{eq:3} may not hold, the function $h_m$ has the
    \emph{gross-substitute} property: $h_m = h_m(w^1,\dots,w^M)$ is
    strictly decreasing with respect to $w^l$, $l\not=m$.
  \end{enumerate}

  Theorem~\ref{th:4} and the property~\ref{item:3} for
  $\pi^m=\pi^m(w,c)$ yield the uniqueness if we can show that the
  equation
  \begin{displaymath}
    h_m(w) = 0,\quad m=1,\dots,M,
  \end{displaymath}
  has no multiple solutions on $\interior{\Sigma^M}$.  Let $w_1$ and
  $w_2$ be two such solutions, $w_1\not=w_2$.  Assume that $w^{m_0}_1
  \leq w^{m_0}_2$; of course, this does not restrict any
  generality. Then
  \begin{displaymath}
    y \set \max_{m=1,\dots,M} \frac{w^m_1}{w^m_2} >1,  
  \end{displaymath}
  and there is an index $l_0\not=m_0$ such that $yw^{l_0}_2 =
  w^{l_0}_1$.  From \ref{item:4} we obtain that
  \begin{displaymath}
    h_m(yw_2) = h_m(w_2) = 0, \quad m=1,\dots,M,
  \end{displaymath}
  while from the gross-substitute property in~\ref{item:5} that
  \begin{displaymath}
    h_{l_0}(yw_2) < h_{l_0}(w_1) = 0. 
  \end{displaymath}
  This contradiction concludes the proof.
\end{proof}

\begin{Acknowledgments}
  The author thanks Kim Weston for comments and corrections. 
\end{Acknowledgments}

\bibliographystyle{plainnat}

\bibliography{../bib/finance}

\end{document}